\newtheoremstyle{indenteddefinition}{\topsep}{\topsep}{\setlength{\leftskip}{\parindent}}{-1em}{\bfseries}{.\mbox{} \\}{5pt plus 1pt minus 1pt}{}
\theoremstyle{indenteddefinition}
\newtheorem{thm}{Theorem}
\newtheorem{lem}{Lemma}
\newtheorem{defn}{Definition}
\newtheoremstyle{indentedproof}{\topsep}{\topsep}{\setlength{\leftskip}{\parindent}}{-1em}{}{\mbox{} \\}{5pt plus 1pt minus 1pt}{\emph{#3}:}
\theoremstyle{indentedproof}
 \newtheorem*{pf}{}
\renewenvironment{proof}[1][\proofname]
  {\par\pushQED{\qed}\begin{pf}[#1]\ignorespaces}
  {\popQED\end{pf}}
\newcommand{\F}{\ensuremath{\mathbb{F}}}
\newcommand{\N}{\ensuremath{\mathbb{N}}}
\newcommand{\Q}{\ensuremath{\mathbb{Q}}}
\newcommand{\R}{\ensuremath{\mathbb{R}}}
\newcommand{\co}{\alpha} 
\newcommand{\propp}{P}
\newcommand{\proppF}{P_\F }
\newcommand{\ADF}{F}
\newcommand{\ADR}{\bar{F}}
\newcommand{\abR}{[a,b]}
\newcommand{\abF}{[a,b]_\F}
\newcommand{\oeR}{[0,1]}
\newcommand{\oeF}{[0,1]_\F}
\newcommand{\RA}{\noindent$(\Rightarrow)$: }
\newcommand{\LA}{\noindent$(\Leftarrow)$: }
\title{Would Real Analysis be complete without the Fundamental Theorem of Calculus?}
 \author{
 Michael Deveau (University of Waterloo, Canada)
 \mbox{} \\
 \mbox{}
 Holger Teismann (Acadia University, Canada)
 }
 \date{June 17, 2015}
\begin{document}

\maketitle

Echoing L.R.Ford's opening words\footnote{``Perhaps the author owes an apology to the reader for asking him to lend his attention to so
elementary a subject, for the fractions to be discussed in this paper are, for the most part, the
halves, quarters, and thirds of arithmetic."} of his delightful \textit{Monthly} article \cite{ford1938}, perhaps we, too, owe an apology to the reader for asking 
a seemingly flippant question in the title of this paper, whose answer must so obviously be `no'.  \linebreak After all, the adjective
``fundamental" says it all -- even if, as Bressoud points out,  that designation did not come into use until relatively recently
\cite{bressoud2011}. We admit that we chose the title for effect, accepting the possibility of leading the reader astray; a more descriptive 
title would have been:  ``would the real \textit{numbers} be complete without the Fundamental Theorem of Calculus?"
In some sense, however, the title is actually accurate in that this paper will show that a 
mathematical ``world" (which we interpret to mean 
``totally ordered field") 
without the Fundamental Theorem of Calculus would necessarily be lacking of many of the most cherished parts of Real 
Analysis.

Over the last decade or so it has been noticed that many statements / theorems from the standard canon of (single-variable) Real Analysis not only crucially depend on the completeness of 
the real numbers but are in fact \emph{equivalent} to completeness; see \cite{riemenschneider2001,propp2013,teismann2013} for various lists of such statements. 
While this fact may be reasonably well known, or at least be somewhat expected, for some statements such as the Intermediate, Mean, and  Extreme Value Theorems, it  may be more surprising for others, such as the Ratio Test \cite{propp2013}, the Principle of Real Induction \cite{clark2012} 
or the Weierstrass Approximation Theorem.  The most surprising feature of the list of statements equivalent to completeness, however, may very well be its sheer size, which, 
in its most recent version \cite{fiftyPlus2013}, comprises 70 items!

Curiously, the most coveted candidate for the list, the Fundamental Theorem of Calculus (FTC), has been the most ``difficult customer'' in this enterprise and 
so far resisted inclusion\footnote{Although \#30 in \cite{riemenschneider2001} and  \#16 in \cite{teismann2013} read identical to Theorem \ref{T2} below, we find their 
treatment with regard to completeness unsatisfactory.}. 

As evidence of the ``prickliness'' of the FTC, consider the simple  function $f:\Q \to \Q $, $f(x) = 1/(1+x^2)$. 
Although $f$ is a perfectly nice  function -- it is uniformly continuous, for example -- it is not Riemann-integrable 
over $[0,1]$, since the value of its integral\footnote{For the definition of the Riemann integral in subfields of the reals, see the last paragraph of Section \ref{Prelim}.} 
would have to be $\arctan (1) = \pi/4$, which is an irrational number. So what is the domain of the
``area function'' $F(x) = \int _0^x f(t)\,dt = \arctan(x)$ appearing in the FTC? Trying to identify it would amount to identifying the rational arguments $x$ for which $\arctan(x)$ is
a rational number, which would get one into deep water. What is more, it appears to be unknown whether $f$ even has an anti-derivative; 
i.e. whether there exists \emph{any} differentiable function $F:\Q \to \Q $ such that $F'(x) = f(x)$.

In this note we present two versions of the FTC, each of which turns out to be equivalent to the completeness of Archimedean fields (ordered subfields of $\R $). 
This is accomplished by separating the two aspects of the problem illustrated above: the first theorem deals with the mere existence of anti-derivatives, 
whereas the second one deals with the integrability of continuous functions and the domain of the area function. Let $\F$ be an arbitrary ordered subfield of $\R$. 

\begin{thm} \label{T1}
 $\F$ is complete if and only if every continuous function defined on a closed and bounded interval has a \emph{uniformly differentiable} anti-derivative.
\end{thm}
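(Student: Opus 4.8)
The plan is to prove the two implications separately. $(\Rightarrow)$ is a quick uniform refinement of the classical Fundamental Theorem of Calculus, while $(\Leftarrow)$ is the substantial half, which I would prove contrapositively by exhibiting a single ``bad'' continuous function over $\oeF$. For $(\Rightarrow)$, assume $\F$ is complete, so $\F=\R$. If $f$ is continuous on a closed bounded interval $\abR$ it is uniformly continuous there, so put $F(x)=\int_a^x f(t)\,dt$. The classical FTC gives $F'=f$, and for the uniform version one writes
\[\frac{F(y)-F(x)}{y-x}-f(x)=\frac{1}{y-x}\int_x^y\bigl(f(t)-f(x)\bigr)\,dt,\]
whose modulus is at most $\sup\{\,|f(t)-f(x)| : t\text{ lies between }x\text{ and }y\,\}$, which by uniform continuity falls below $\varepsilon$ as soon as $|y-x|<\delta$. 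That is essentially all there is to $(\Rightarrow)$.

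For $(\Leftarrow)$, assume $\F$ is \emph{not} complete; I will construct a continuous $f\colon\oeF\to\F$ with no uniformly differentiable anti-derivative, which disproves the right-hand statement. Incompleteness yields some $\rho\in\R\setminus\F$; choosing rationals $p<\rho<q$ and replacing $\rho$ by $(\rho-p)/(q-p)$, we may assume $\rho\in(0,1)$. Call it $r$; since $\Q\subseteq\F$, the number $r$ is irrational. Fix base-$4$ digits $d_n\in\{0,1,2,3\}$ with $r=\sum_{n\ge1}d_n4^{-n}$, and on the interval $[1-2^{-n+1},\,1-2^{-n}]$ (which has length $2^{-n}$) let $f$ be the piecewise-linear ``tent'' that rises from $0$ to height $2d_n2^{-n}$ and back to $0$, with $f(1)=0$. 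Then $f$ is $\F$-valued, has all slopes bounded by $12$ in absolute value (hence is Lipschitz, hence continuous on $\oeF$), has tent heights tending to $0$, and its $n$-th tent has area exactly $d_n4^{-n}\in\Q$.

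The crux is a rigidity lemma, valid in every ordered subfield of $\R$: if $F\colon\oeF\to\F$ is uniformly differentiable with $F'=f$, then for $x<y$ in $\oeF$ and any bound $M\in\F$ with $f\le M$ (resp.\ $f\ge M$) on $[x,y]_\F$ one has $F(y)-F(x)\le M(y-x)$ (resp.\ $\ge$), and moreover $F(y)-F(x)=\int_x^y f$ whenever $f$ is piecewise linear on $[x,y]$. Each assertion follows by a telescoping estimate: partition $[x,y]$ at \emph{rational} points with mesh below the $\delta$ attached to a chosen $\varepsilon$ (possible because $\Q$ is dense in $\F$), apply the uniform-differentiability inequality on each subinterval, sum, and let $\varepsilon\to0$. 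Granting the lemma, $F$ is nondecreasing (since $f\ge0$); the tents tile $[0,1-2^{-N}]_\F$, so $F(1-2^{-N})-F(0)=\sum_{n\le N}d_n4^{-n}$; and $0\le F(1)-F(1-2^{-N})\le\bigl(\max_{n>N}(\text{tent heights})\bigr)2^{-N}\to0$. Writing $q_N=1-2^{-N}$ we obtain
\[\sum_{n\le N}d_n4^{-n}=F(q_N)-F(0)\le F(1)-F(0)\le\sum_{n\le N}d_n4^{-n}+o(1),\]
so $F(1)-F(0)=\sum_{n\ge1}d_n4^{-n}=r\notin\F$, contradicting $F(0),F(1)\in\F$. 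Hence no such $F$ exists, and $(\Leftarrow)$ is proved.

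I expect the rigidity lemma to be the real obstacle: the point is that uniform differentiability \emph{alone} --- with no appeal to suprema, the Extreme Value Theorem, or even the existence of $\int f$ inside $\F$ --- already pins the increments of $F$ to areas with arbitrary precision, the density of $\Q$ in $\F$ being precisely what makes the telescoping close up. It is also worth stressing that the word ``uniformly'' is doing genuine work: the cheap non-examples, namely functions that are locally constant on each side of a gap in $\F$ (such as at $\sqrt2$ when $\F=\Q$), are differentiable with derivative $0$ yet non-constant, but they fail to be uniformly differentiable, so replacing ``uniformly differentiable'' by ``differentiable'' would presumably wreck the equivalence.
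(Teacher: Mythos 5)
Your proposal is correct, but the $(\Leftarrow)$ direction follows a genuinely different route from the paper's. The paper also encodes the base-$4$ digits of some $\co\in\R\setminus\F$ as areas of a piecewise-linear function (its ``P--function'', with parallelograms accumulating at $0$ rather than tents accumulating at $1$), but it then extends the hypothetical anti-derivative $F$ from $\oeF$ to $\oeR$: the heart of its argument is Theorem~\ref{thm:UniformDiffRespectsExtension}, proved in the appendix, which shows that uniform differentiability survives the unique continuous extension to $\R$, after which the classical FTC in $\R$ forces $F(1)=\co\notin\F$. You instead stay entirely inside $\F$ and replace the extension machinery by your ``rigidity lemma'' --- a telescoping Riemann-sum estimate showing that uniform differentiability alone pins $F(y)-F(x)$ to $\int_x^y f$ for piecewise-linear $f$ with rational data, and to $M(y-x)$-type bounds in general. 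This is exactly the ``approximate Mean Value Theorem'' that the paper leaves as a hint for its final theorem, so in effect you prove that last result first and deduce Theorem~\ref{T1} from it. Your route is more self-contained and arguably more elementary (no appeal to the completion or to the classical FTC), at the cost of redoing quantitative estimates that the paper's extension theorem packages once and reuses; the paper's route buys a reusable tool (extensions of uniformly differentiable functions) that also powers its treatment of Theorem~\ref{T2} and the surrounding discussion. Your $(\Rightarrow)$ argument, bounding the difference quotient of $\int_a^x f$ directly by the oscillation of $f$, is a harmless streamlining of the paper's detour through Theorem~\ref{Complete} and the Mean Value Theorem. One small point to make explicit when you write up the rigidity lemma: the inequality $a\le b+\epsilon c$ for all $\epsilon\in\F^+$ does imply $a\le b$ in any ordered subfield of $\R$, and the squeeze $\sum_{n\le N}d_n4^{-n}\le F(1)-F(0)\le\sum_{n\le N}d_n4^{-n}+3\cdot4^{-N}$ is then legitimately passed to the limit in $\R$, where $\F$ sits as a subfield.
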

\begin{thm} \label{T2}
 $\F$ is complete if and only if every continuous function defined on a closed and bounded interval is Riemann-integrable 
 (consequently, its (signed) ``area function'' is defined on the whole interval).
\end{thm}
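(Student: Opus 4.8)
The plan is to handle the two implications separately; the forward direction is essentially classical, so the substance lies in the counterexample establishing the reverse. For $(\Rightarrow)$ I would first record that a complete ordered subfield of $\R$ must be all of $\R$: given $r\in\R$, the set $\{x\in\F:x<r\}$ is nonempty and bounded above in $\F$, so by completeness it has a supremum $s\in\F$, and since $\F$ is Archimedean and hence dense in $\R$ neither $s<r$ nor $s>r$ is possible, forcing $s=r\in\F$. Once $\F=\R$ the classical theory applies: a continuous function on a closed bounded interval is uniformly continuous, uniform continuity makes the gap $U(f,P)-L(f,P)$ between the upper and lower Darboux sums tend to $0$ as the mesh shrinks, and completeness supplies the common value $\sup_P L(f,P)=\inf_P U(f,P)$, which is the integral; applied to $f$ restricted to $[a,x]$ this also shows the area function is defined throughout $[a,b]$. (The same argument goes through verbatim in any Archimedean field with the least-upper-bound property, but for subfields of $\R$ that property already forces the field to be $\R$.)

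For $(\Leftarrow)$ I would prove the contrapositive: if $\F$ is not complete, I will exhibit a continuous $\F$-valued function on a closed bounded $\F$-interval that is not Riemann-integrable. Since $\R$ is complete while $\F$ is not, $\F$ is a proper subfield of $\R$; fix $r\in\R\setminus\F$, with $r>0$ after replacing it by $-r$ if necessary, and set $\gamma:=r/m$ for some $m\in\N$ with $m>r$, so that $\gamma\in(0,1)$ and $\gamma\notin\F$ (because $1/m\in\F^\times$). Let $f\colon\oeF\to\F$ be the characteristic function of $\{x\in\oeF:x>\gamma\}$. The crucial observation is that, precisely because $\gamma\notin\F$, the sets $\oeF\cap(-\infty,\gamma)$ and $\oeF\cap(\gamma,\infty)$ are each clopen in $\oeF$ and partition it; hence $f$ is locally constant, so continuous, and it plainly takes values in $\{0,1\}\subseteq\F$.

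It then remains to check that $f$ is not Riemann-integrable over $\oeF$. For any partition $0=t_0<\dots<t_n=1$ with all $t_i\in\F$ there is a unique index $k$ with $t_{k-1}<\gamma<t_k$, and a one-line computation gives $L(f,P)=1-t_k$ and $U(f,P)=1-t_{k-1}$; thus $U(f,P)-L(f,P)=t_k-t_{k-1}$ can be made arbitrarily small, while $\sup_P L(f,P)$ and $\inf_P U(f,P)$, evaluated in $\R$, are both equal to $1-\gamma$, since as $P$ varies $t_k$ ranges over every $\F$-point above $\gamma$ and $t_{k-1}$ over every one below. Consequently no $I\in\F$ can satisfy $L(f,P)\le I\le U(f,P)$ for all $P$, because such an $I$ would have to equal $1-\gamma\notin\F$; equivalently, the tagged Riemann sums of $f$ accumulate on both sides of $1-\gamma$ and therefore have no limit in $\F$. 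This is the required continuous non-integrable function, completing the contrapositive. I expect the one genuinely delicate point to be aligning this last step with whichever formulation of the $\F$-valued Riemann integral is fixed in Section~\ref{Prelim}---Darboux sums versus tagged partitions---and verifying there that the obstruction is exactly the Dedekind gap at $\gamma$; the remaining steps are routine.
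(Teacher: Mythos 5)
Your proof is correct, but the $(\Leftarrow)$ direction takes a genuinely different route from the paper. Where you use the locally constant step function $f$ that is $0$ below the gap $\gamma\in\R\setminus\F$ and $1$ above it, the paper instead reuses the P--function $\propp$ constructed in Section~\ref{Propp}: a piecewise-linear, \emph{uniformly} continuous function on $\oeF$ whose right Riemann sums converge (in $\R$) to the prescribed $\co\in\R\setminus\F$, so that no limit can exist in $\F$. Your argument is considerably more elementary and self-contained --- it needs no base-$4$ encoding or integral computation, only the density of $\F$ in $\R$ and the observation that the Darboux/Riemann sums are squeezed onto $1-\gamma\notin\F$; the trapping argument you give does survive the translation to the paper's tagged-Riemann-sum definition of integrability, since every tagged sum of mesh $<\delta$ lies within $\delta$ of $1-\gamma$. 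What the paper's heavier counterexample buys is twofold: the same function serves for Theorem~\ref{T1} (where it must extend continuously to $[0,1]\subset\R$, which your step function cannot, having a jump at $\gamma$), and, because $\propp$ is uniformly continuous, it shows that Theorem~\ref{T2} remains an equivalence even when ``continuous'' is strengthened to ``uniformly continuous'' --- a refinement the paper notes explicitly and which your (non-uniformly-continuous) counterexample cannot deliver. Incidentally, your function is essentially the one the paper does use elsewhere, as the failed antiderivative in the proof of Theorem~\ref{Complete} and in the discussion of the Evaluation Theorem.
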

The bulk of this paper is devoted to the proof of the ``$\Leftarrow$'' direction of Theorem~\ref{T1}, which is done by contradiction.
More specifically, assuming that $\F$ is not complete, a continuous function (called ``P--function" for ``Propp function" below, as it is a slightly modified version of a function proposed by
J.~Propp \cite{propp2012}) is constructed, whose integral is a number in $\R \setminus \F$; this number would have to be the value of the anti-derivative at $x=1$, which 
is impossible. (In this respect, the P--function is similar to the function $f$ above.)
The assumption of \emph{uniform} differentiability is used to extend the various functions appearing in the proof from $\F$ to $\R$ and to utilize the standard FTC in $\R$. The P--function also provides the required counterexample for the ``$\Leftarrow$'' direction of Theorem~\ref{T2}. As an aside, we note that the strength of Theorem~\ref{T2}
does not change if ``continuous" is replaced with ``\emph{uniformly} continuous", as the P--function is actually uniformly continuous.
 
In view of Theorems \ref{T1} and \ref{T2}, the FTC has finally yielded and revealed its relationship to completeness --- well, actually, not quite: 
it is is still keeping one secret, namely  whether the assumption of \emph{uniform} differentiability in Theorem~\ref{T1} is really necessary, or if ``uniform" 
could be dropped  ``without penalty", i.e. without changing the strength of the theorem. The function $f:\Q \to \Q $ defined above is a case in point, as it still might -- just might -- have an anti-derivative\footnote{In view of Theorem~\ref{T1}, it cannot have a uniformly differentiable one, however. 
So if an anti-derivative does exist, it will have to be a ``fairly strange" one.}.    
 
This note is organized as follows. After briefly fixing some (minimal amount of) nomenclature and notation (Section~\ref{Prelim}), we present in Section~\ref{Propp} 
the construction of the P--function, which is the main technical device in the proofs of Theorems \ref{T1} and~\ref{T2}. 
In the next section we discuss the properties of uniformly differentiable functions as needed in the proofs (Theorem \ref{thm:UniformDiffRespectsExtension}); 
this section also contains another statement equivalent to completeness, which also involves uniform differentiability (Theorem~\ref{Complete}). In Section~\ref{Main} we finally present 
the proofs of the main results, which, thanks the preliminary work of the preceding sections, are actually pleasantly short. 
We end the paper with addressing the question that some readers may already have been wondering about: what about ``the other'' (part of the) FTC?  

\section{Preliminaries} \label{Prelim}
We assume that the reader is familiar with the basic definitions and properties of (totally) ordered fields. In this paper we will only consider \emph{Archimedean} (ordered) fields, 
which are known to be isomorphic to (ordered) subfields of the reals. We may therefore restrict ourselves to subfields of $\R $; accordingly, $\F$ will always denote such a subfield.
Examples of proper subfields of $\R $ include simple field extensions of $\Q$, such as $\Q(\sqrt{2})$, as well as the algebraic, computable, and constructible numbers. 
While all these fields are countable, it can be shown that $\R$ also contains proper subfields which are uncountable (see e.g. \cite{butcher1985}).  
Readers who still wonder -- or worry -- how large (or small) the class of subfields of $\R $ really is may take comfort in the knowledge that there are already uncountably many 
pairwise non-isomorphic subfields in the class of countable subfields alone \cite{schutt1989}.

As mentioned in the introduction, \emph{completeness} may be defined in many ways; the most widely-used definition is probably the one requiring the existence of suprema for bounded sets. 
Of course, any of the equivalent definitions will do, but  for the purposes of this paper, it is sufficient to interpret the completeness of $\F$ as $\F = \R$,
since $\R$ is obviously complete and any proper subfield of $\R $ is incomplete.

We adopt the convention that intervals $[a,b]$ \emph{without} subscripts refer to $\R$, whereas we typically add subscripts when referring to $\F$; i.e., for $a,b\in \F $, $a<b$,
we have $\abF = \abR \cap \F $.  Moreover, unless otherwise stated, $f$ always refers to a function $f : \abF \to \F $, and the function $\bar{f} :\abR \to \R $ denotes 
the (unique) extension of $f$, provided it exists.

Finally, we need to define the Riemann integral in subfields of the reals. Formally, the definition is identical to the usual one: functions are integrable if and only if the 
appropriate limits of Riemann sums converge \emph{in $\F$}, where, obviously, the Riemann sums are to be constructed \emph{within} $\F $; i.e. partition and sample points 
are numbers in $\F$.  The reader is cautioned, however, that this definition is different from the ones used in
\cite{riemenschneider2001} and \cite{teismann2013} (the latter being based on \cite{olmsted1973}, but still using different terminology).    

\section{Construction of the P--function}  \label{Propp}
\begin{defn}[\cite{propp2012}] \label{ProppFunction}
 Let $\F$ be incomplete and $\co$ in $\R \setminus \F$.
 We may assume w.l.o.g. that $\co$ is an irrational number in $\left(\frac{3}{8},\frac{5}{8}\right) \subset \R$.
 
 We will construct a continuous function $\proppF $ from $\oeR_\F =\oeR\cap \F $ to itself. 
 Let $0.d_1d_2d_3\ldots$ be a base $4$ representation of $\co - \frac{3}{8}$. That is,
 \[
   \co = \frac{3}{8} + \frac{d_1}{4} + \frac{d_2}{4^2} + \frac{d_3}{4^3} + \cdots = \frac{3}{8} + \sum_{j=1}^\infty \frac{d_j}{4^j}.
 \]
 with $d_j \in \left\{0,1,2,3\right\}$.
 
 Note however that since $\co \in \left(\frac{3}{8},\frac{5}{8}\right)$, we have $\co - \frac{3}{8} \in \left(0,\frac{1}{4}\right)$ and so $d_1$ must be $0$.
 
 Let $n \in \N$ be arbitrary and consider the square $S_n$ in $\oeR \times \oeR$ with corners $\left(2^{-n}, 2^{-n}\right)$ 
 and $\left(2^{-(n+1)}, 2^{-(n+1)}\right)$.
 We define $\propp _n : \left[2^{-(n+1)}, 2^{-n}\right] \to \left[2^{-(n+1)}, 2^{-n}\right]$ algebraically by requiring it to pass through the two points
 \begin{align*}
  Q_n &= \left(2^{-n}\left(1 - \frac{d_{n+2} + 1}{8}\right), 2^{-(n+1)}\right) \\
  R_n &= \left(2^{-n}\left(1 - \frac{d_{n+2}}{8}\right), 2^{-n}\right)
 \end{align*}
 and the two corners of $S_n$ defined above, and be linear on any open interval not containing any of these points. The resulting function is obviously continuous on its domain.
 
 We can also describe this geometrically. Clearly, the area of $S_n$ is $4^{-(n+1)}$. We divide $S_n$ as shown in Fig.~\ref{fig:PgramSn}. 
 \begin{figure}[ht]
  \centering\includegraphics[scale=0.4]{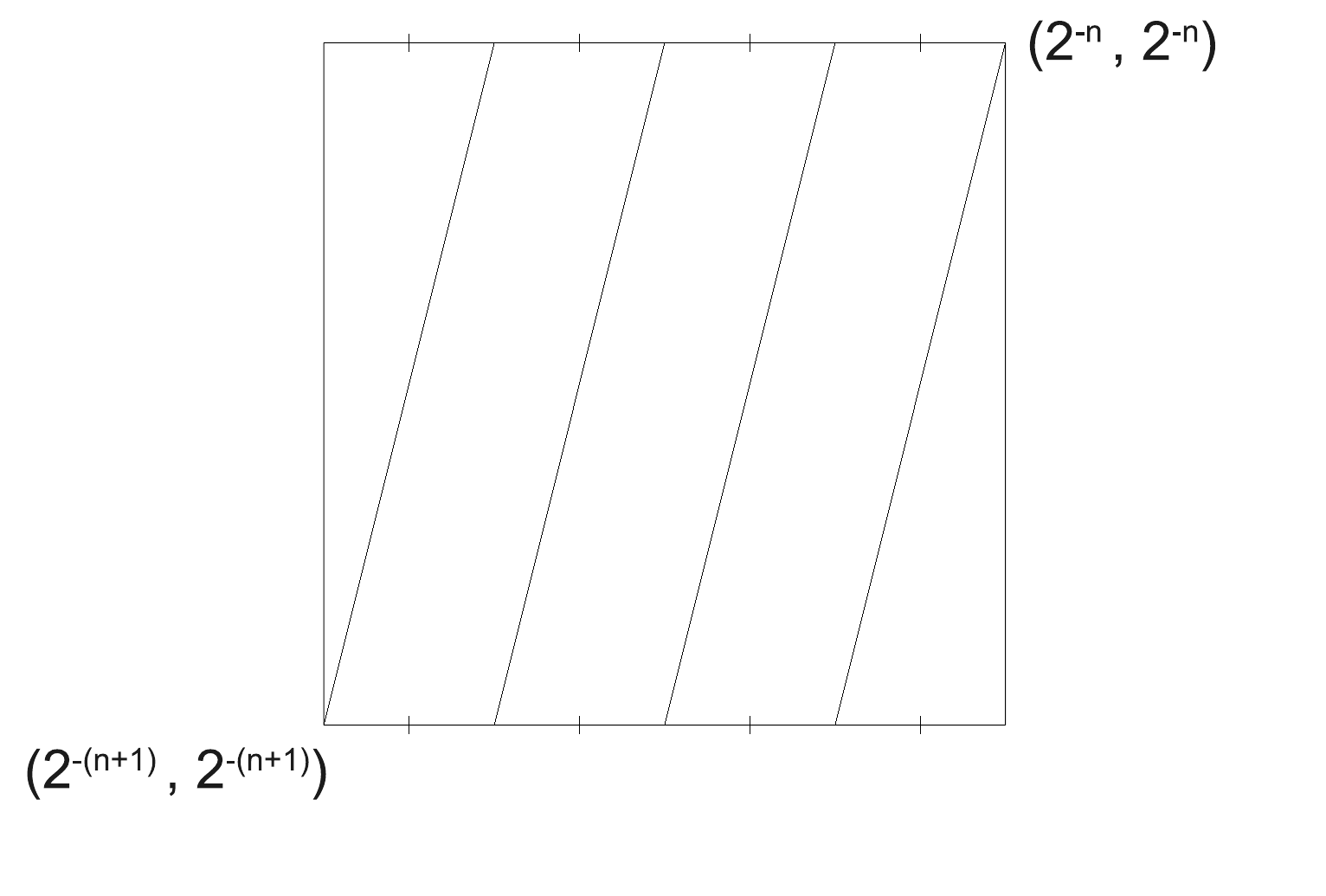}
  \caption[Construction of divisions of $S_n$.]{\label{fig:PgramSn}$S_n$ divided into equal regions, called \emph{parallelograms}.}
 \end{figure}
 Note that each parallelogram has area $4^{-n-2}$. By selecting a division of $S_n$  as in Fig. \ref{fig:DefnOfFn}, the division will have area $d_{n+2} 4^{-n-2}$.
 \begin{figure}[ht]
  \centering\includegraphics[scale=0.4]{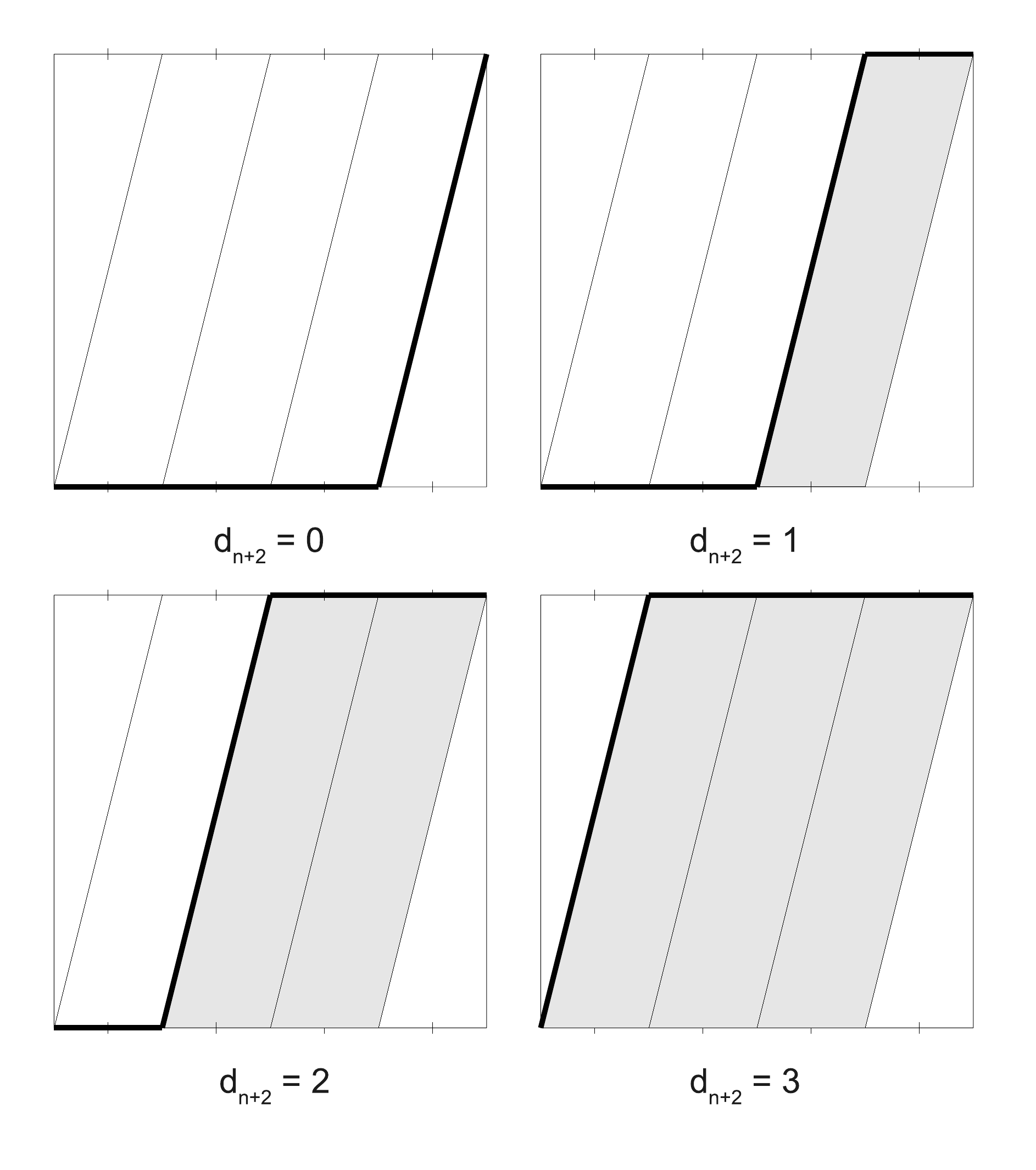}
  \caption[Construction of $\propp _n$ from $S_n$.]{\label{fig:DefnOfFn}Selection of regions in $S_n$ using the value of $d_{n+2}$.}
 \end{figure}
 We can treat the selected division as a function $\propp_n$ from $\left[2^{-(n+1)}, 2^{-n}\right]$ to itself shown by the darkened edges in Fig.~\ref{fig:DefnOfFn}.
 
 Let us now consider the definite integral of the function $\propp_n$ on its domain. Using the analytic description of $\propp_n$, a short computation yields 
 $\int_{2^{-(n+1)}}^{2^{-n}} \propp_n(x)\,dx = \left(d_{n+2} + 1/2\right)4^{-(n+2)} + 4^{-(n+1)}$.
 
 Now we define $\propp: \oeR \to \R$ by requiring it to be the continuous function passing through all $Q_n$, $R_n$, $(0,0)$ and $(1,1)$, and be linear on any open interval not
 containing any of these points. This is simply the piecewise concatenation of all $\propp _n$s. The function $\propp$ is continuous on $\oeR$ and so is uniformly continuous there.
 
 Finally, we define $\proppF : \oeR_\F \to \F$ to be the restriction of $\propp $ to $\oeR_\F$. Note that $\proppF$ is (well-defined and) uniformly continuous.
\end{defn}

As an example, suppose $c$ is an irrational number starting $0.4485\ldots$. Then $c = 3/8 + 1/4^2 + 0/4^3 + 2/4^4 + 3/4^5 + \cdots$, 
and so we get the function shown in Fig.~\ref{fig:ExampleFn}, accurate on $[1/16,1]$.
 
\begin{figure}[ht]
\centering\includegraphics[scale=0.5]{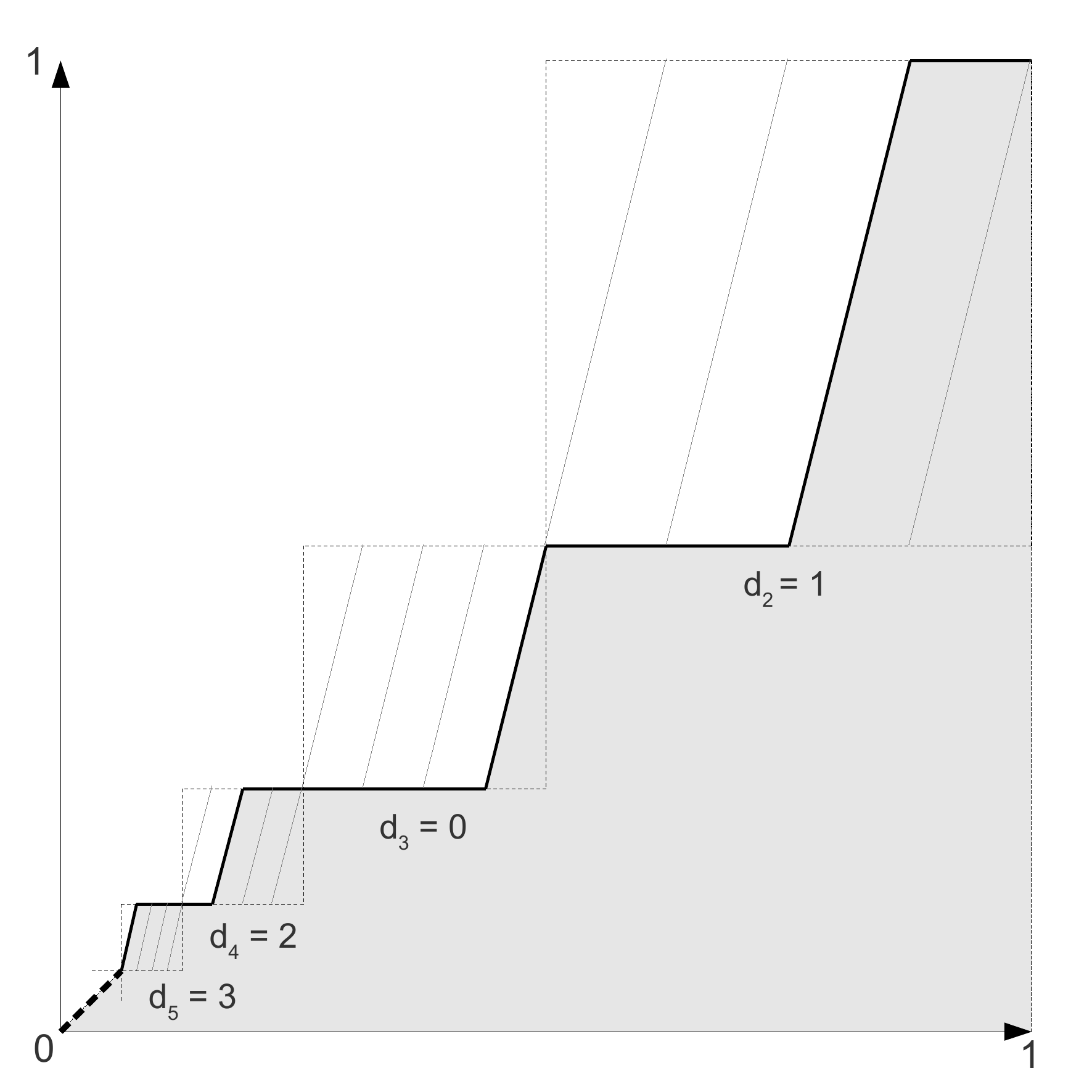}
\caption[Example $f$ using the Propp construction for $c= 0.4485\ldots$.]{\label{fig:ExampleFn}P--function for $c = 0.4485\ldots$.}
\end{figure}

\begin{lem} \label{lem:ProppIntegral}
 Let $\propp:\oeR \to \R$ be the unrestricted function defined in Def.~\ref{ProppFunction}. Then $\int_0^1 \propp(x)\,dx= \co$, where $\co \in \R \setminus \F $ is the value used in the 
 construction of $\propp$.
\end{lem}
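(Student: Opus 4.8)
The plan is to evaluate $\int_0^1 \propp(x)\,dx$ by decomposing $\oeR$ into the countably many intervals $I_n=\left[2^{-(n+1)},2^{-n}\right]$ on which $\propp$ coincides with the piecewise-linear function $\propp_n$, summing the per-piece values recorded in Definition~\ref{ProppFunction}, and recognizing the resulting series as the base-$4$ expansion of $\co$.

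First I would dispose of the (mild) convergence bookkeeping. Since $\propp$ is continuous on $\oeR$, it is Riemann-integrable there, and since $0\le \propp\le 1$ we have $0\le \int_0^{2^{-N}}\propp(x)\,dx\le 2^{-N}\to 0$; hence $\int_0^1\propp(x)\,dx=\lim_{N\to\infty}\int_{2^{-N}}^{1}\propp(x)\,dx$. By finite additivity of the Riemann integral, each $\int_{2^{-N}}^{1}\propp(x)\,dx$ is the sum of the finitely many $\int_{I_n}\propp_n(x)\,dx$ with $I_n\subseteq[2^{-N},1]$, so
\[
  \int_0^1\propp(x)\,dx=\sum_{n}\int_{I_n}\propp_n(x)\,dx=\sum_{n}\left[\left(d_{n+2}+\tfrac12\right)4^{-(n+2)}+4^{-(n+1)}\right].
\]
All terms are nonnegative, so the series converges absolutely and may be split and rearranged freely.

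Then I would split the sum into three pieces and evaluate each. Re-indexing $\sum_{n}d_{n+2}4^{-(n+2)}$ by $j=n+2$ turns it into $\sum_{j\ge 2}d_j4^{-j}$, which equals $\sum_{j\ge 1}d_j4^{-j}=\co-\tfrac38$ precisely because $d_1=0$ by construction. The remaining two pieces are geometric series: $\tfrac12\sum_{n}4^{-(n+2)}=\tfrac1{24}$ and $\sum_{n}4^{-(n+1)}=\tfrac13$. Since $-\tfrac38+\tfrac1{24}+\tfrac13=0$, this gives $\int_0^1\propp(x)\,dx=\co$.

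I do not anticipate a real obstacle: the argument is essentially arithmetic. The two points needing a little care are the justification of the infinite decomposition of the integral --- which is exactly what the boundedness-based tail estimate above supplies --- and keeping the digit indices straight, so that the ``spare'' leading digit $d_1=0$ is correctly absorbed and the first series lands on $\co-\tfrac38$ rather than a shift of it. The per-piece formula $\int_{I_n}\propp_n(x)\,dx=\left(d_{n+2}+\tfrac12\right)4^{-(n+2)}+4^{-(n+1)}$ itself is taken from Definition~\ref{ProppFunction}, where it follows from a direct computation with the explicit piecewise-linear description of $\propp_n$.
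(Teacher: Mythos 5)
Your proposal is correct and follows essentially the same route as the paper: decompose $\oeR$ into the dyadic intervals, sum the per-piece integrals $\int_{I_n}\propp_n = \left(d_{n+2}+\tfrac12\right)4^{-(n+2)}+4^{-(n+1)}$, and identify the resulting series with the base-$4$ expansion of $\co$ (your arithmetic $-\tfrac38+\tfrac1{24}+\tfrac13=0$ matches the paper's grouping $\tfrac98\cdot\tfrac13=\tfrac38$). The only minor difference is the justification of the infinite decomposition --- you use the direct tail bound $\int_0^{2^{-N}}\propp \le 2^{-N}$, whereas the paper extends the truncations by zero and invokes uniform convergence to interchange limit and integral; both are valid.
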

\begin{proof}
 First, define, for $n \in \N$,
 \[
  t_n(x) \equiv \begin{cases}
          \propp |_{\left[2^{-(n+1)},1\right]}(x) &\mbox{if } x \in \left[2^{-(n+1)},1\right] \\
          0 &\mbox{otherwise}
         \end{cases}
 \]
 and note that $t_n$ is integrable with
 \[
  \int_0^1 \! t_n(x)\,dx = \sum_{j=0}^n \int_{2^{-(n+1)}}^{2^{-n}} \propp _j(x)\,dx.
 \]
 Now, $\{t_n\}$ converges uniformly to $f$ on $\oeR$, and so
 \begin{align*}
  \int_0^1 \! \propp(x)\,dx&= \int_0^1 \lim_{n \to \infty} t_n(x)\,dx= \lim_{n \to \infty} \int_0^1 \! t_n(x)\,dx \\
  &= \lim_{n \to \infty} \sum_{j=0}^n \int_{2^{-(n+1)}}^{2^{-n}} \propp_j(x)\,dx= \sum_{j=0}^\infty \int_{2^{-(n+1)}}^{2^{-n}} \propp_j(x)\,dx
 \end{align*}

 We computed the definite integral of each $\propp_j$ above and so
 \begin{align*}
  \int_0^1 \! \propp(x)\,dx&= \sum_{j=0}^\infty \left(d_{j+2} + \frac{1}{2}\right)4^{-(j+2)} + 4^{-(j+1)} \\
  &= \sum_{j=0}^\infty d_{j+2}\,4^{-(j+2)} + \left(\frac{1}{8} + 1\right)\sum_{j=0}^\infty\frac{1}{4}\left(\frac{1}{4}\right)^j \\
  &= \sum_{j=2}^\infty \frac{d_j}{4^j} + \frac{9}{8} \cdot \frac{1}{3} = \sum_{j=2}^\infty \frac{d_j}{4^j} + \frac{3}{8}
 \end{align*}
 which is $\co$ by definition. So $\int_0^1 \propp(x)\,dx = \co$ as desired.
\end{proof}

\section{Uniformly differentiable functions} \label{Diff}

In this section, we explore some of the properties of uniformly differentiable functions. Theorem~\ref{thm:UniformDiffRespectsExtension} below is the main tool in the
proof of Theorem~\ref{T1}. In addition, we proudly present a new entry on the list of statements equivalent to completeness, which also involves 
uniform differentiability (Theorem~\ref{Complete}).    

\begin{defn} 
 Let $f$ be a function from $\abF \subset \F $ to $\F$ and let
 \[
  D_c[f](x) = D_x[f](c) \equiv \frac{f(x)-f(c)}{x-c} \qquad (x,c\in \abF,\, x\not=c). 
 \]
 denote the \emph{difference quotient of $f$}.
 \begin{enumerate}[label=(\roman{*}), ref=\roman{*}]
  \item {(Differentiability)}\label{defn:diff'ble}
   \mbox{} \\
   $f$ is \emph{differentiable at $c\in \abF$} if there exists a number $a \in \F$ such that, for every $\epsilon \in \F^+$, there is some $\delta \in \F^+$ 
   such that $|D_c[f](x)-a| < \epsilon$ for every $x \in \abF$ with $0 < |x-c| < \delta$.  If this is the case, we say that $a$ is the \emph{derivative of $f$ at $c$} 
   and write $f'(c) = a$.
  \item {(Uniform Differentiability)}\label{defn:unif-diff'ble}
   \mbox{} \\
    $f$ is \emph{uniformly differentiable on $\abF$} if there exists a function $g:\abF \to \F$ such that for every $\epsilon \in \F^+$, there is a $\delta \in \F^+$ 
    such that $|D_y[f](x)-g(y)| < \epsilon$ for every pair $x,y \in \abF$ with $0 < |x-y| < \delta$. In this case, we say that $g$ is the \emph{derivative of $f$ on $\abF$} 
    and write $f' = g$.
 \end{enumerate} 
\end{defn}
Clearly, $a$ and $g$ are unique and if $f$ is uniformly differentiable, it is differentiable for every $c \in \abF$ with $f'(c) = g(c)$. 
Moreover, the following standard facts may be verified using their standard proofs from Real Analysis.     

\begin{enumerate}[label=(\Roman{*}), ref=(\Roman{*})]
 \item \label{lem:appendA:4}
  $f$ is differentiable at $c$ with value $f'(c)=a$ if and only if there is a function $r_c : \abF \to \F$, continuous at $c$ with $r_c(c) = 0$, such that for all $x \in \abF$,
  $f(x) = f(c) + a(x-c) + r_c(x)(x-c)$.
 \item
  Suppose $f$ is differentiable at $c$. Then $f$ is continuous at $c$.
 \item \label{lem:appendA:3}
  Suppose $f$ is uniformly differentiable. Then $f'$ is uniformly continuous.
\end{enumerate}
The converse of \ref{lem:appendA:3} requires completeness and is in fact equivalent to it:

\begin{thm} \label{Complete}
 $\F $ is complete if and only if every differentiable function whose derivative is uniformly continuous is uniformly differentiable. 
\end{thm}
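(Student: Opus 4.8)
The plan is to prove the two implications separately. For the forward direction I would simply note that completeness places us in $\R$, where the Mean Value Theorem is available and the statement is a textbook fact; for the reverse direction I would argue by contraposition, exhibiting in any incomplete $\F$ an explicit function that is ``locally constant but not constant''.

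\RA Assume $\F$ is complete, so $\F=\R$, and let $f:\abR\to\R$ be differentiable with $f'$ uniformly continuous. Given $\epsilon\in\R^+$, I would pick $\delta\in\R^+$ witnessing the uniform continuity of $f'$. For $x,y\in\abR$ with $0<|x-y|<\delta$, the Mean Value Theorem (legitimate because $\R$ is complete) provides some $\xi$ strictly between $x$ and $y$ with $D_y[f](x)=f'(\xi)$; since $|\xi-y|<|x-y|<\delta$, we get $|D_y[f](x)-f'(y)|=|f'(\xi)-f'(y)|<\epsilon$. This is exactly uniform differentiability of $f$, with derivative $f'$.

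\LA I would prove the contrapositive. Assume $\F$ is a proper subfield of $\R$. Since $\F\supseteq\Q$, it cannot contain all of $(0,1)$ --- otherwise every positive real, being of the form $n\cdot s$ with $n\in\N$ and $s\in(0,1)$ by the Archimedean property, would lie in $\F$, forcing $\F=\R$ --- so I would fix $\co\in(0,1)\setminus\F$ and define $h:\oeF\to\F$ by $h(x)=0$ for $x<\co$ and $h(x)=1$ for $x>\co$; this is well defined because $\co\notin\F$. The verification then has three steps. (i) $h$ is locally constant: for $c\in\oeF$ the real number $|c-\co|$ is positive, so the Archimedean property together with $\Q\subseteq\F$ supplies $\delta\in\F^+$ with $\delta<|c-\co|$, and $h$ is constant on $(c-\delta,c+\delta)\cap\F$. (ii) Hence $h$ is differentiable at every point of $\oeF$ with $h'\equiv 0$, and $h'\equiv 0$ is (trivially) uniformly continuous. (iii) Nevertheless $h$ is not uniformly differentiable: were it so, its derivative would have to be $h'\equiv 0$, so for $\epsilon=1$ some $\delta\in\F^+$ would satisfy $|D_y[h](x)|<1$ whenever $0<|x-y|<\delta$; but by density of $\Q$ in $\R$ one may choose $x,y\in\Q\subseteq\F$ with $\co-\tfrac12\min(\delta,1)<x<\co<y<\co+\tfrac12\min(\delta,1)$, whence $0<y-x<\delta$ while $D_y[h](x)=\tfrac{1}{y-x}>1$. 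Thus $h$ is a differentiable function with uniformly continuous derivative that fails to be uniformly differentiable, which is what the contrapositive requires.

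Neither direction is technically deep; the one thing that has to be found is the counterexample for the reverse direction --- in an incomplete field there is a locally constant function that is not globally constant, and its difference quotients blow up across the ``missing'' point $\co$, a phenomenon that pointwise differentiability and uniform continuity of the derivative both tolerate but uniform differentiability does not. The only points requiring care are the repeated appeals to ``$\Q\subseteq\F$ and $\Q$ is dense in $\R$'' to produce elements of $\F$ on both sides of $\co$ and arbitrarily close to it, and, in the forward direction, the observation that invoking the Mean Value Theorem is precisely where completeness is consumed.
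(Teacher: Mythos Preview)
Your proof is correct and follows essentially the same route as the paper: the forward direction uses the Mean Value Theorem (available once $\F=\R$) to compare $D_y[f](x)$ with $f'(y)$, and the backward direction produces the same ``step across a gap'' counterexample, a function that is $0$ to the left of $\co\in\R\setminus\F$ and $1$ to the right, then observes that its difference quotients across $\co$ blow up. Your version is a touch more careful in two places --- you justify why $(0,1)\setminus\F$ is nonempty and you keep the domain as $\oeF$, matching the paper's standing convention that (uniform) differentiability is defined on closed bounded intervals --- but these are refinements of the same argument, not a different one.
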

\begin{proof}
 \RA Since $\F $ is complete, we have $\F =\R $ and the Mean Value Theorem holds. Let $\epsilon \in \F^+$ be given and $\delta \in \F^+$ such that  $|f'(x)-f'(y)| < \epsilon$ 
 for all $x,y\in [a,b] $ such that $|x-y| < \delta$. For arbitrary such $x,y $ (assume w.l.o.g. that $x < y$), choose $w \in [x,y]$ such that $(f(x)-f(y))/(x-y) = f'(w)$. 
 Then $|x-w| < \delta$ and so $\left|D_x[f](y) - f'(x)\right| = |f'(w) - f'(x)| < \epsilon $, which implies that $f$ is uniformly differentiable, as desired.

 \LA Assume that $\F $ is incomplete and let $\co \in \R \setminus \F $.  Then the derivative of the function $f:\F \to \F $ defined by
 \[
  f(x) = \begin{cases}
          0, \mbox{ if } x < \co \\
	  1, \mbox{ if } x > \co
         \end{cases}
 \] 
 is identically zero and hence uniformly continuous. However, by choosing sequences $\{x_n^\pm\} \subset \F$ with $\lim_{n \to \infty } x_n^\pm = \co $ and $x_n^-<\co<x_n^+ $,
 we may show that $f$ cannot be uniformly differentiable, since $|D_{x_n^-}[f](x_n^+) - f(x_n^-) | = 1/(x_n^+-x_n^-) \to \infty $, a contradiction. 
 (Here we used that $\F $ is a dense subset of $\R $, which  is shown in the proof of Lemma~\ref{lem:appendA:16} below.)
\end{proof} 

Before stating the other main result of this section (Theorem~\ref{thm:UniformDiffRespectsExtension}), we record a simple but important fact about uniformly continuous functions.
\begin{lem} \label{lem:appendA:16}
 Let $f$ be uniformly continuous on $\abF$. Then $f$ has a unique (uniformly) continuous extension to $\abR \subset \R $, where $\abR \cap \F = \abF$; 
 i.e. there exists a continuous (hence uniformly continuous) function $\bar{f} : \abR  \to \R $ such that $\bar{f}(x) = f(x)$ for all $x\in \abF$ and $\bar{f}$ 
 is the only function with these properties. In particular, $f$ is bounded\footnote{In light of the assumed (uniform) continuity of $f$, this assertion may seem redundant.
 However, it turns out that one needs to assume completeness of $\F $ to ensure that every continuous function is bounded. Similarly, the property that every \emph{uniformly} 
 continuous function is bounded is equivalent to the field being Archimedean.}.
\end{lem}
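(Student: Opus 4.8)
The plan is to invoke the classical extension theorem for uniformly continuous maps defined on a dense subset, adapted to the present setting where the ambient completeness lives in $\R$ rather than in $\F$ itself.

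\textbf{Density.} First I would record that $\F$ is dense in $\R$: being a subfield, $\F$ contains $0$ and $1$, hence all of $\Z$, hence all of $\Q$, and $\Q$ is dense in $\R$; therefore $\abF = \abR \cap \F$ is dense in $\abR$. This also discharges the density claim already used in the proof of Theorem~\ref{Complete}.

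\textbf{Construction of $\bar f$.} Fix $x \in \abR$ and pick a sequence $(q_n) \subset \abF$ with $q_n \to x$. Given $\epsilon > 0$, choose $\epsilon' \in \F^+$ with $\epsilon' < \epsilon$ and let $\delta \in \F^+$ be a modulus of uniform continuity for $f$ at $\epsilon'$. Since $(q_n)$ is Cauchy in $\R$, for large $m,n$ we have $|q_m - q_n| < \delta$, hence $|f(q_m) - f(q_n)| < \epsilon'$; so $(f(q_n))$ is Cauchy in $\R$ and, by completeness of $\R$, converges. A standard interleaving argument (merging two sequences tending to $x$) shows the limit does not depend on the chosen sequence, so I may set $\bar f(x) := \lim_n f(q_n)$; constant sequences give $\bar f(x) = f(x)$ for $x \in \abF$.

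\textbf{Uniform continuity, uniqueness, boundedness.} To see $\bar f$ is uniformly continuous on $\abR$, given $\epsilon > 0$ I would take $\epsilon' \in \F^+$ with $3\epsilon' \le \epsilon$ and $\delta \in \F^+$ a modulus for $f$ at $\epsilon'$; for $x,y \in \abR$ with $|x-y| < \delta/3$, density lets me pick $p,q \in \abF$ with $|p-x|, |q-y|$ small enough that $|p-q| < \delta$ and $|\bar f(x)-f(p)|, |\bar f(y)-f(q)| < \epsilon'$, whence $|\bar f(x)-\bar f(y)| \le |\bar f(x)-f(p)| + |f(p)-f(q)| + |f(q)-\bar f(y)| < 3\epsilon' \le \epsilon$. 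Uniqueness is immediate, since two continuous functions $\abR \to \R$ agreeing on the dense set $\abF$ agree everywhere. Finally, $\bar f$ is continuous on the compact interval $\abR \subset \R$, hence bounded, so its restriction $f$ is bounded as well — and this last step is the only place where the full completeness of $\R$, as opposed to mere density of $\F$, is used, matching the footnote's caveat.

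I do not expect a serious obstacle here. The only point requiring care is the mismatch between the $\F$-valued modulus $\delta$ and the genuinely real arguments $x,y \in \abR$; this is handled entirely by approximating real points by points of $\abF$, to which the hypothesis applies directly, at the cost of an extra $\epsilon'$ or two in the triangle inequality.
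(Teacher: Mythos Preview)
Your argument is correct and is essentially the paper's approach, only spelled out in full: the paper's proof simply invokes the standard extension theorem for uniformly continuous maps on dense subsets as a black box and verifies only the density hypothesis via the same $\Q \subset \F \subset \R$ observation you give. Your Cauchy-sequence construction, uniform-continuity check, uniqueness-by-density, and boundedness-via-compactness are exactly the contents of that standard result, so nothing genuinely different is happening.
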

\begin{proof}
 It is a standard result of Real Analysis that uniformly continuous functions can (uniquely) be extended to the closure of their domains, so we only need to argue that $\abF$ is a 
 dense subset of $\abR$. However, this immediately follows from the density of $\Q $ in $\R $ and $\Q \subset \F \subset \R $.\footnote{Note that  every ordered field contains a 
 copy of $\Q $.} 
\end{proof}

The next theorem shows that uniform differentiability ``jibes well'' with continuous extension, which is the main reason for requiring it of anti-derivatives in Theorem~\ref{T1}. 

\begin{thm} \label{thm:UniformDiffRespectsExtension}
 Let $f$ be uniformly differentiable on $\abF$. Then $f$ is uniformly continuous and its unique (uniformly) continuous extension $\bar{f} : \abR \to \R$ is uniformly differentiable 
 with $\bar{f}'(x) = f'(x)$ for all $x \in \abF$. Furthermore, the extension of $f'$ exists (uniquely) and coincides with $\bar{f}'$ on $\abR$.
\end{thm}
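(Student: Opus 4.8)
The plan is to work through the three assertions in the order in which they appear, building each on the last. First I would establish that $f$ itself is uniformly continuous: by fact~\ref{lem:appendA:3}, the derivative $g=f'$ is uniformly continuous on $\abF$, hence bounded by Lemma~\ref{lem:appendA:16}, say $|g|\le M$. Feeding this into the definition of uniform differentiability --- if $|D_y[f](x)-g(y)|<\epsilon$ whenever $0<|x-y|<\delta$, then $|f(x)-f(y)|\le(M+\epsilon)|x-y|$ on that scale --- gives a Lipschitz-type estimate on small scales, so $f$ is uniformly continuous. Lemma~\ref{lem:appendA:16} then produces the unique uniformly continuous extension $\bar f:\abR\to\R$, and, applied to $g=f'$, the unique uniformly continuous extension $\bar g:\abR\to\R$.

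The heart of the proof is showing that $\bar f$ is uniformly differentiable with derivative $\bar g$. Fix a real $\epsilon>0$ and pick $\epsilon'\in\F^+$ with $\epsilon'<\epsilon$; uniform differentiability of $f$ yields $\delta\in\F^+$ with $|D_y[f](x)-g(y)|<\epsilon'$ for all $x,y\in\abF$ with $0<|x-y|<\delta$. Now take arbitrary $x,y\in\abR$ with $0<|x-y|<\delta$, and choose (using $\Q\subset\F$ and the density of $\abF$ in $\abR$ established in Lemma~\ref{lem:appendA:16}) sequences $x_n,y_n\in\abF$ with $x_n\to x$ and $y_n\to y$. Since $|x-y|>0$ and $|x-y|<\delta$ both strictly, we have $0<|x_n-y_n|<\delta$ for all large $n$, hence $|D_{y_n}[f](x_n)-g(y_n)|<\epsilon'$. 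Letting $n\to\infty$: the difference quotient $D_{y_n}[f](x_n)=(\bar f(x_n)-\bar f(y_n))/(x_n-y_n)$ converges to $(\bar f(x)-\bar f(y))/(x-y)=D_y[\bar f](x)$, by continuity of $\bar f$ together with $x_n-y_n\to x-y\ne 0$, while $g(y_n)=\bar g(y_n)\to\bar g(y)$ by continuity of $\bar g$. Therefore $|D_y[\bar f](x)-\bar g(y)|\le\epsilon'<\epsilon$. As $\delta$ depends only on $\epsilon$, this is precisely uniform differentiability of $\bar f$ on $\abR$ with $\bar f'=\bar g$.

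The remaining claims then fall out. For $x\in\abF$ we get $\bar f'(x)=\bar g(x)=g(x)=f'(x)$. And the (continuous) extension of $f'$, which exists and is unique by fact~\ref{lem:appendA:3} together with Lemma~\ref{lem:appendA:16}, is a uniformly continuous function on $\abR$ whose restriction to $\abF$ is $g=f'$; by the uniqueness clause of Lemma~\ref{lem:appendA:16} it must coincide with $\bar g$, which we have just identified with $\bar f'$. Hence the extension of $f'$ equals $\bar f'$ on all of $\abR$.

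I expect the main obstacle to be the limit interchange in the second paragraph: one must verify that the approximating points can be taken in $\abF$ (density), that the denominators $x_n-y_n$ remain nonzero and bounded away from $0$ for large $n$ (which is where the hypothesis $x\ne y$ is used), and that the strict inequality $|x-y|<\delta$ indeed survives the passage to $x_n,y_n$. These are routine, but they are where the argument actually lives; everything else is bookkeeping with Lemma~\ref{lem:appendA:16} and fact~\ref{lem:appendA:3}.
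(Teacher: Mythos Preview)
Your proposal is correct and follows essentially the same route as the paper: use fact~\ref{lem:appendA:3} and Lemma~\ref{lem:appendA:16} to get $f'$ bounded, deduce uniform continuity of $f$ (the paper isolates this as Lemma~\ref{lem:appendA:2}, you inline the Lipschitz estimate), extend both $f$ and $f'$ continuously, and then pass the uniform-differentiability inequality to the limit along sequences $x_n,y_n\in\abF$ approximating arbitrary $x,y\in\abR$. The paper also records two auxiliary lemmas about the remainder $r_c$ and pointwise differentiability of $\bar f$ at points of $\abF$, but its concluding argument does not actually invoke them and proceeds exactly as you do.
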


The proof of this theorem is somewhat lengthy and therefore relegated to the appendix. Instead we proceed to the 

\section{Proof of the main results} \label{Main} 

\begin{proof}[Proof of Theorem~\ref{T1}]
 \RA Let $\F$ be complete, i.e. $\F =\R $, and so the standard Fundamental Theorem of Calculus I holds: if $f: \abF \to \F$ is an arbitrary continuous function, 
 it has a continuous antiderivative, say $F$. Since $f$ is defined on the closed and bounded interval $\abR$, it is uniformly continuous, so the derivative $F' = f$ of $F$
 is uniformly continuous. By Theorem~\ref{Complete}, this implies that $F$ is uniformly differentiable on $\abF$. \\
 \LA We argue by contradiction. Let $\F$ be incomplete and $\co \in \R \setminus \F $.
 Then, by Def.~\ref{ProppFunction} and Lemma~\ref{lem:ProppIntegral}, we have a continuous function $\propp: \oeR \to \R$ such that $\int_0^1 \propp(x)\,dx= \co$. 
 Denote its restriction $\proppF : \oeF \to \F$ to $\oeF$ by $f$. \renewcommand{\proppF}{f} 
 
 By assumption, there is a uniformly differentiable function $\ADF : \oeF \to \F$ such that $\ADF'(x) = \proppF(x)$ for all $x \in \oeR_\F$.
 We may assume w.l.o.g. $\ADF (0) = 0$. Then, by Lemma~\ref{thm:UniformDiffRespectsExtension}, $\ADF$ has a unique extension $\ADR $ to $\oeR$, 
 which is (uniformly) differentiable and whose derivative $\ADR'$ is equal to the continuous extension of $\ADF' = \proppF$, i.e. $\ADR '= \overline{\ADF'} = \bar{\proppF} = \propp$.
 So $\ADR $ is an antiderivative of $\propp$ in $\R$.
 
 Now $\ADF(1) = \ADF(1) - \ADF(0) = \ADR(1) - \ADR(0)$, since $\ADR$ and $\ADF$ must coincide at $0$ and $1$. But $\ADR(1) - \ADR(0) = \int_0^1 \propp(x)\,dx= \co$ 
 by the standard Fundamental Theorem of Calculus II applied to  $\propp$ and $\ADR$, which  are functions in $\R$. So $\ADF(1) = \co$, which is impossible, since $\co \notin \F$.
\end{proof}

\begin{proof}[Proof of Theorem \ref{T2}]
 \RA Standard Real Analysis. \\  \renewcommand{\proppF}{f} 
 \LA Assume again that $\F$ is incomplete. We claim that the restriction $\proppF = \propp_\F $ of the P--function $\propp $ cannot be Riemann-integrable. 
  To see this, assume it is, which means that the limit of right sums, $\lim_{n \to \infty} \frac{1}{n} \sum_{j=1}^n \proppF(\frac{j}{n})$, exists in $\F $; call it $a \in \F$. 
 Since $\proppF(j/n) = \propp(j/n)$ for all $n\in \N $, we obtain 
 \[
  a = \lim_{n\to \infty} \frac{1}{n} \sum_{j=1}^n \proppF\left(\frac{j}{n}\right) = \lim_{n\to \infty} \frac{1}{n} \sum_{j=1}^n \propp\left(\frac{j}{n}\right) 
  = \int_0^1 \propp(x)\,dx= \alpha \in \R \setminus \F,
 \]
 a contradiction since $a \in \F$.
\end{proof}

\section{But wait: what about ``the other'' FTC?} 

So far we have not addressed the second (part) of the FTC\footnote{The numbering of the parts is somewhat inconsistent, but judging from our sample of Calculus text books,
the evaluation part is more often designated as ``Part II''.}, often called the ``Evaluation Theorem'' (ET).  Is it perhaps also equivalent to completeness? 
One hint that this may indeed be the case may be found in the standard proofs of the ET utilizing the Mean Value Theorem (in form or another), which itself is equivalent to completeness.
As the reader is well aware by now, the name of the game of showing the difficult direction (ET $\Rightarrow$ completeness) is to find a counterexample, 
if $\F $ is assumed to be incomplete. Here this amounts to finding an integrable function $f$, possessing an anti-derivative $F$, such that 
\begin{equation} \label{FTC2}
 \int_a^b f(x)\,dx = F(b) - F(a) 
\end{equation}
does \emph{not} hold.       
A moment's reflection reveals that the functions $f \equiv 0 $ and $F$ given by the function $f$ in the proof of Theorem~\ref{Complete} ($\Leftarrow $)
have precisely the required properties. As a result, we obtain
\begin{thm}
 $\F $ is complete if and only if for every Riemann-integrable function $f$ possessing an anti-derivative $F$ the identity \eqref{FTC2} holds.       
\end{thm}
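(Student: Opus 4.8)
The plan is to prove both directions, with the ``$\Leftarrow$'' (hard) direction following the template already used throughout the paper: assume $\F$ is incomplete and exhibit an explicit counterexample to \eqref{FTC2}.

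For the ``$\Rightarrow$'' direction, I would simply invoke standard Real Analysis: if $\F=\R$, then every Riemann-integrable $f$ with an anti-derivative $F$ satisfies $\int_a^b f = F(b)-F(a)$ by the usual Evaluation Theorem (proved, e.g., via the Mean Value Theorem applied to $F$ on the subintervals of a partition, or via the already-established FTC machinery). This deserves only a one-line remark.

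For the ``$\Leftarrow$'' direction, the observation at the end of the excerpt does the work: take $f \equiv 0$ on $[0,1]_\F$ and let $F$ be the step function from the proof of Theorem~\ref{Complete}~($\Leftarrow$), namely $F(x)=0$ for $x<\co$ and $F(x)=1$ for $x>\co$, where $\co\in\R\setminus\F$. Since $\co\notin\F$, the domain $\oeF$ is partitioned by this definition into $\{x\in\oeF : x<\co\}$ and $\{x\in\oeF : x>\co\}$ with no ``gap'' point in $\F$, so $F$ is genuinely defined on all of $\oeF$. The key steps are then: (i) $f\equiv 0$ is trivially Riemann-integrable over $[0,1]_\F$ with $\int_0^1 f = 0$; (ii) $F$ is an anti-derivative of $f$, i.e. $F'(x)=0=f(x)$ for every $x\in\oeF$ --- this is exactly the computation already carried out in the proof of Theorem~\ref{Complete}, where it is noted that $F$ has derivative identically zero (for any $c\in\oeF$, $c$ lies strictly on one side of $\co$, so $F$ is locally constant near $c$ and $F'(c)=0$); (iii) therefore $\int_0^1 f(x)\,dx = 0 \neq 1 = F(1)-F(0)$, so \eqref{FTC2} fails with $a=0$, $b=1$. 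This contradiction with the assumed validity of the Evaluation Theorem forces $\F$ to be complete.

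I do not expect any real obstacle here: all the ingredients (existence of the step function $F$, the fact that it is differentiable with zero derivative, density of $\F$ in $\R$) are already established in the excerpt, and the only mild care needed is the remark that $F$ is well-defined on all of $\oeF$ because $\co\notin\F$ --- which is precisely why incompleteness is what makes the counterexample go through. If anything, the ``subtlety'' worth flagging is purely expository: one should point out that, in contrast to the P--function used for Theorems~\ref{T1} and~\ref{T2}, here the failure of \eqref{FTC2} comes not from the integral landing outside $\F$ but from the anti-derivative $F$ having a ``jump'' at a point invisible to $\F$, so that $F(1)-F(0)$ records a change that the (zero) integrand does not see.
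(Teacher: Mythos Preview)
Your proposal is correct and follows essentially the same approach as the paper: the paper's ``proof'' of this theorem consists precisely of the remark preceding its statement, namely that the functions $f\equiv 0$ and the step function $F$ from the proof of Theorem~\ref{Complete}~($\Leftarrow$) furnish the required counterexample for the ``$\Leftarrow$'' direction, while ``$\Rightarrow$'' is standard Real Analysis via the Mean Value Theorem. The only implicit point you might make explicit is that one may assume w.l.o.g.\ $\co\in(0,1)$ (as the paper does when constructing the P--function), so that the restriction of $F$ to $\oeF$ genuinely has $F(0)=0$ and $F(1)=1$.
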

 
One final question: having been sensitized to the utility of the assumption of \emph{uniform} differentiability, we may be tempted to ask what the effect might be of replacing 
``anti-derivative'' with ``\emph{uniformly differentiable} anti-derivative''. The answer is given in the next theorem whose proof is left to the motivated reader. 
(\emph{Hint:} Prove that, in any subfield of the reals, uniformly differentiable functions satisfy an approximate version of the Mean Value Theorem.) 

\begin{thm}
 Let $f$ be a Riemann-integrable function with a uniformly differentiable anti-derivative $F$. Then \eqref{FTC2} holds. 
\end{thm}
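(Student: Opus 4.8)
The plan is to follow the suggested hint: extract an approximate Mean Value Theorem (used here really in ``Fundamental Theorem'' form) from uniform differentiability, and then pair it with the Riemann sums that witness integrability. First I would observe that the defining inequality of uniform differentiability is, after multiplying through by $|x-y|$, exactly an approximate increment formula: if $F$ is uniformly differentiable on $\abF$ with $F' = f$, then for every $\epsilon \in \F^+$ there is a $\delta \in \F^+$ such that
\[
  \bigl| F(x) - F(y) - f(y)(x - y) \bigr| \le \epsilon\,|x-y| \qquad \text{whenever } x,y \in \abF,\ 0 < |x-y| < \delta .
\]
Telescoping this over the uniform partition $x_i = a + i(b-a)/n$ of $\abF$, tagging the $i$-th subinterval by its right endpoint $x_i$ (so as to match the right Riemann sums used for Theorem~\ref{T2}), and taking $n$ large enough that the mesh $(b-a)/n$ is below $\delta$, the triangle inequality gives
\[
  \Bigl| \bigl(F(b) - F(a)\bigr) - \frac{b-a}{n}\sum_{i=1}^n f\bigl(a + i\tfrac{b-a}{n}\bigr) \Bigr| \;\le\; \epsilon \sum_{i=1}^n (x_i - x_{i-1}) \;=\; \epsilon\,(b-a).
\]
This is the approximate MVT in the form actually needed: $F(b)-F(a)$ lies within $\epsilon(b-a)$ of an honest right Riemann sum of $f$, and every quantity here stays inside $\F$.

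Next I would invoke Riemann-integrability. Since $F$ is uniformly differentiable, $f = F'$ is uniformly continuous by fact~\ref{lem:appendA:3}, hence bounded by Lemma~\ref{lem:appendA:16}, so its Riemann sums are well-behaved. Put $I := \int_a^b f(x)\,dx \in \F$. By the integrability hypothesis the right sums $\frac{b-a}{n}\sum_{i=1}^n f(a+i(b-a)/n)$ converge to $I$ in $\F$ as $n \to \infty$ (this is the notion used in the proof of Theorem~\ref{T2}; under any of the standard definitions of the Riemann integral the same conclusion holds, boundedness of $f$ being what one needs for the argument via upper and lower sums). Given $\epsilon \in \F^+$, choose $\delta$ as above and then $n$ so large that \emph{both} the mesh $(b-a)/n$ is below $\delta$ and the $n$-th right sum is within $\epsilon$ of $I$ --- both being ``for all sufficiently large $n$'' conditions, the first using that $\F$ is Archimedean. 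Combining with the displayed inequality, $|F(b) - F(a) - I| \le \epsilon(b-a) + \epsilon$. Since $\epsilon \in \F^+$ was arbitrary, $F(b) - F(a) = I = \int_a^b f(x)\,dx$, which is \eqref{FTC2}.

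In summary, the steps in order are: (1) rewrite the definition of uniform differentiability as the approximate increment formula; (2) telescope it over the uniform partition with right-endpoint tags to obtain ``$F(b)-F(a)$ is approximately a right Riemann sum of $f$''; (3) use boundedness of $f$ (via fact~\ref{lem:appendA:3} and Lemma~\ref{lem:appendA:16}) together with integrability to replace that Riemann sum by $I$; (4) let $\epsilon \to 0$. The only delicate point --- and the one I expect to be the main obstacle --- is step (3): one must ensure that the Riemann sums produced by the telescoping in step (2) are exactly those the integrability hypothesis controls. Tagging the uniform partition at right endpoints sidesteps this cleanly, as those are precisely the right sums appearing in the definition of integrability used for Theorem~\ref{T2}; no density or refinement argument about general tagged partitions is required, and completeness of $\F$ is never invoked --- fitting for the ``easy'' companion of the preceding theorem.
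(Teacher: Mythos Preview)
Your proposal is correct and follows exactly the route indicated by the paper's hint (the paper leaves the proof to the reader, suggesting only that one prove an approximate Mean Value Theorem for uniformly differentiable functions). Your step (1) is precisely that approximate MVT in increment form, and the telescoping/limit argument in steps (2)--(4) is the intended conclusion; the aside about boundedness of $f$ via \ref{lem:appendA:3} and Lemma~\ref{lem:appendA:16} is not strictly needed under the paper's tagged-partition definition of the Riemann integral, but it does no harm.
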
  

\appendix

\section*{Appendix: Proof of Theorem \ref{thm:UniformDiffRespectsExtension}}

The bulk of the proof broken up into several lemmas, which we list first. 
\begin{lem} \label{lem:appendA:2}
 Suppose $f$ is uniformly differentiable and $f'$ is bounded. Then $f$ is uniformly continuous.
\end{lem}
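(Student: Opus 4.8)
The plan is to convert the boundedness of $f'$ into a Lipschitz-type estimate that holds uniformly on a single fixed small scale, and then to shrink the radius further to extract ordinary uniform continuity. Crucially, every step is pure ordered-field arithmetic and makes no appeal to completeness, so the argument is valid for an arbitrary subfield $\F$ of $\R$.

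First I would apply the definition of uniform differentiability at one fixed tolerance—say $\epsilon = 1$ (any fixed element of $\F^+$ will do). This yields a single $\delta_0 \in \F^+$ with the property that
\[
 \left| D_y[f](x) - f'(y) \right| < 1 \qquad \text{for all } x,y \in \abF \text{ with } 0 < |x-y| < \delta_0.
\]
Next I would invoke the hypothesis that $f'$ is bounded: fix $M \in \F^+$ with $|f'(y)| \le M$ for all $y \in \abF$. Combining this with the previous inequality via the triangle inequality gives $|D_y[f](x)| < M+1$, which rearranges to the key estimate
\[
 |f(x) - f(y)| < (M+1)\,|x-y| \qquad \text{whenever } 0 < |x-y| < \delta_0 .
\]
This is the heart of the argument: it is a uniform Lipschitz bound, but one that is only guaranteed on the scale $\delta_0$.

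Finally, to verify uniform continuity, let an arbitrary $\epsilon \in \F^+$ be given and set $\delta = \min\{\delta_0,\ \epsilon/(M+1)\}$, which lies in $\F^+$ since $M+1 > 0$. If $x,y \in \abF$ satisfy $|x-y| < \delta$, then either $x = y$, in which case $|f(x)-f(y)| = 0 < \epsilon$ trivially, or $0 < |x-y| < \delta \le \delta_0$, in which case the Lipschitz estimate applies and yields $|f(x)-f(y)| < (M+1)|x-y| < (M+1)\delta \le \epsilon$. Either way $|f(x)-f(y)| < \epsilon$, which is exactly uniform continuity of $f$ on $\abF$.

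I do not expect a serious obstacle here. The one point requiring a little care is that the Lipschitz estimate is only available for separations below $\delta_0$, so the final $\delta$ must be capped at $\delta_0$ as well as at $\epsilon/(M+1)$; forgetting the first cap would break the argument. Everything else—the choice $\epsilon = 1$, the triangle-inequality step, and the final $\min$—is routine, and in particular the quotient $\epsilon/(M+1)$ and the minimum of two positive field elements remain in $\F^+$ with no recourse to suprema or limits.
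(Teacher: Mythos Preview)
Your argument is correct and follows essentially the same idea as the paper's proof: bound $|f(x)-f(y)| = |D_y[f](x)|\,|x-y|$ by combining the bound on $f'$ with the uniform-differentiability estimate. The only cosmetic difference is that you apply uniform differentiability once at the fixed tolerance $1$ to extract a local Lipschitz constant $M+1$, whereas the paper re-invokes uniform differentiability at tolerance $\epsilon/2$ for each given $\epsilon$; both lead to the same conclusion with the same ingredients.
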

\begin{proof}
 Let $\epsilon \in \F^+$ be given and $\delta_1 \in \F^+$ be as in \eqref{defn:unif-diff'ble} with $\epsilon / 2$. 
 Moreover, let $\delta = \min\{1,\delta_1,\epsilon/(2M)\}$, where $M$ is a bound on $|f'|$. Then, for  $x,y\in \abF$ 
 such that $|x-y| < \delta$, 
 \begin{align*}
  |f(x) - f(y)| &= |f'(y)(x-y)+\left(D_y[f](x) - f'(y)\right)(x-y)| \\
  &< \delta|f'(y)| + \delta|D_y[f](x) - f'(y)| \\
  &\leq \frac{\epsilon|f'(y)|}{2M} + \frac{\epsilon}{2} = \frac{\epsilon}{2} + \frac{\epsilon}{2} = \epsilon
 \end{align*}
 This shows the uniform continuity of $f$.
\end{proof}

\begin{lem}\label{lem:appendA:5}
 Let $f$ be differentiable at $c$, with $f$ uniformly continuous on $\abF$. Then the function $r_c:\abF \to \F$ defined in \ref{lem:appendA:4} above is uniformly continuous.
\end{lem}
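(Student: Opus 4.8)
The plan is to work from the explicit description of $r_c$. Solving the identity in \ref{lem:appendA:4} for $x\neq c$ gives $r_c(x)=D_c[f](x)-a$ with $a=f'(c)$, while $r_c(c)=0$. Hence for $x,y\in\abF\setminus\{c\}$ we have $r_c(x)-r_c(y)=D_c[f](x)-D_c[f](y)$, and adding and subtracting $\frac{f(y)-f(c)}{x-c}$ produces the decomposition
\[ r_c(x)-r_c(y)=\frac{f(x)-f(y)}{x-c}-D_c[f](y)\cdot\frac{x-y}{x-c}. \]
All the trouble sits at $c$, where division by $x-c$ can magnify small perturbations; the idea is to isolate a neighbourhood of $c$ and dispatch it using the differentiability of $f$ at $c$, while applying the decomposition above only where the denominator $x-c$ is bounded away from $0$.

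Concretely, given $\epsilon\in\F^+$, I would first invoke the continuity of $r_c$ at $c$ (recall $r_c(c)=0$) to fix $\eta\in\F^+$ with $|r_c(x)|<\epsilon/3$ whenever $|x-c|<\eta$. Next, using Lemma~\ref{lem:appendA:16}, I would fix a bound $M\in\F^+$ with $|f|\le M$ on $\abF$; then on the ``far'' set $\{y:|y-c|\ge\eta/2\}$ the estimate $|D_c[f](y)|\le 2M/(\eta/2)=4M/\eta$ holds. Finally I would set $\delta=\min\{\eta/2,\ \delta_f,\ \eta^2\epsilon/(24M)\}$, where $\delta_f$ is supplied by the uniform continuity of $f$ at tolerance $\eta\epsilon/6$.

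The verification splits into two cases for a pair $x,y$ with $|x-y|<\delta$. If $|x-c|<\eta$ and $|y-c|<\eta$ (which includes the possibility $x=c$ or $y=c$), then $|r_c(x)-r_c(y)|\le|r_c(x)|+|r_c(y)|<2\epsilon/3$. Otherwise some point is at distance $\ge\eta$ from $c$; since $\delta\le\eta/2$, both points then lie at distance $\ge\eta/2$ from $c$, so $x,y\neq c$ and the decomposition applies with $1/|x-c|\le 2/\eta$. There the first summand is at most $\frac{2}{\eta}|f(x)-f(y)|<\epsilon/3$ by the choice of $\delta_f$, and the second is at most $\frac{4M}{\eta}\cdot\frac{2|x-y|}{\eta}<\epsilon/3$ by the choice of $\delta$, so again $|r_c(x)-r_c(y)|<2\epsilon/3<\epsilon$.

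I expect the only genuine obstacle to be the seam between the two regimes: I must rule out a nearby pair having one point close to $c$ and the other far from it, and this is precisely enforced by the constraint $\delta\le\eta/2$, which funnels every admissible pair into one of the two clean cases and so removes any mixed case. The remaining work is routine estimation, and all constants ($M$, $\eta$, and the thresholds) can be taken in $\F$ because $f$ is $\F$-valued and $\F$ is Archimedean.
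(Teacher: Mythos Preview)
Your argument is correct and follows essentially the same route as the paper's: a near/far split around $c$ (with the buffer $\delta\le\eta/2$ forcing any admissible pair into a single regime), the triangle inequality $|r_c(x)|+|r_c(y)|$ in the near case, and an algebraic decomposition of $D_c[f](x)-D_c[f](y)$ bounded via uniform continuity and the bound $|f|\le M$ in the far case. The only cosmetic differences are the constants and the particular rearrangement of the difference quotient (the paper writes it as $\frac{f(x)-f(y)}{y-c}+(x-y)\frac{f(x)-f(c)}{(x-c)(y-c)}$), and the paper organizes the dichotomy as ``at least one point within $\delta_1/2$'' versus ``both at least $\delta_1/2$ away,'' which is logically the mirror image of yours.
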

\begin{proof}
 Let $\epsilon \in \F^+$ be given. Then since $f$ is differentiable at $c$, there is  $\delta_1 \in \F^+$ such that
 $|D_c[f](x) - f'(c)| < \epsilon/2$ and so $r_c(x)$ gives $|r_c(x)| < \epsilon/2$  for all $x \in \abF$ with $0 < |x - c| < \delta_1$.
 
 Since $f$ is uniformly continuous on $\abF$, there is  $\delta_2 \in \F^+$ such that $|f(x) - f(y)| < \epsilon\delta_1/4$
 for all $x,y \in \abF$ such that $0<|x-y|<\delta _2$.
 
 Also, $f$ is bounded on $\abF$ by Lemma \ref{Complete}, so let $M \in \F^+$ be a bound for $f$, i.e. $|f(x)| \leq M$ for all $x \in \abF$, and let $\delta_3 = \delta_1^2\epsilon/(16M)$.
 
 Now choose $\delta = \min\{\delta_1/2,\delta_2,\delta_3\}$ and let $x,y \in \abF$ such that $0<|x-y| < \delta$.

 \noindent\textsc{Case I}:  $|x-c| < \delta_1/2$ or $|y-c| < \delta_1/2$.
 
 W.l.o.g., assume the former. Then
 \[
  |y-c| \leq |y-x| + |x-c| < \delta + \frac{\delta_1}{2} \leq \frac{\delta_1}{2} + \frac{\delta_1}{2} = \delta_1
 \]
 
 and so both $|x-c|$ and $|y-c|$ are less than $\delta_1$. Then by definition,
 \[
  |r_c(x)-r_c(y)| \leq |r_c(x)| + |r_c(y)| < \epsilon/2 + \epsilon/2 < \epsilon.
 \]

 \noindent\textsc{Case II}: $|x-c|\ge \delta_1/2$ and $|y-c| \ge \delta_1/2$. 
 
 Then
 \begin{align*}
  |r_c(x)-r_c(y)| &= |D_c[f](x) - D_c[f](y)| = \left|\frac{f(x)-f(c)}{x-c} - \frac{f(y)-f(c)}{y-c} \right| \\
  &= \left|\frac{f(x)-f(y)}{y-c} + (x-y)\frac{f(x)-f(c)}{(x-c)(y-c)}\right| \\
  &\leq \frac{|f(x)-f(y)|}{|y-c|} + |x-y|\frac{|f(x)|+|f(c)|}{|x-c||y-c|} \\
  &< \frac{\delta_1\epsilon/4}{\delta_1/2} + \frac{\delta_1^2\epsilon}{16M}\frac{2M}{(\delta_1/2)^2} = \frac{\epsilon}{2} + \frac{\epsilon}{2} = \epsilon .
 \end{align*}
 So in both cases, $|r_c(x)-r_c(y)| < \epsilon$ for all $|x-y| < \delta$, which shows that $r_c$ is uniformly continuous on $\abF$.
\end{proof}

\begin{lem} \label{lem:appendA:6}
 Let $f$ be differentiable at $c\in \abF$ and uniformly continuous on $\abF$. Then the unique uniformly continuous extension 
 $\bar{f}:\abR \to \R $ of $f$ is differentiable at $c$ with $f'(c) = \bar{f}'(c)$.
\end{lem}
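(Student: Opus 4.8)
The plan is to leverage the first-order ``Taylor'' characterization of differentiability recorded in fact~\ref{lem:appendA:4}, together with the extension lemma for uniformly continuous functions (Lemma~\ref{lem:appendA:16}). Since $f$ is differentiable at $c$ with $f'(c)=a$, fact~\ref{lem:appendA:4} produces a function $r_c:\abF\to\F$, continuous at $c$ with $r_c(c)=0$, such that $f(x)=f(c)+a(x-c)+r_c(x)(x-c)$ for all $x\in\abF$. Because $f$ is moreover uniformly continuous on $\abF$, Lemma~\ref{lem:appendA:5} upgrades this to: $r_c$ is uniformly continuous on $\abF$.

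First I would extend the remainder. By Lemma~\ref{lem:appendA:16}, $r_c$ has a unique (uniformly) continuous extension $\bar{r}_c:\abR\to\R$; since $c\in\abF$ and $\bar{r}_c$ agrees with $r_c$ on $\abF$, we get $\bar{r}_c(c)=r_c(c)=0$, and $\bar{r}_c$ is in particular continuous at $c$. Next, consider $h:\abR\to\R$ defined by $h(x)=f(c)+a(x-c)+\bar{r}_c(x)(x-c)$. This $h$ is continuous on $\abR$, being a sum of products of continuous functions, and for $x\in\abF$ it equals $f(c)+a(x-c)+r_c(x)(x-c)=f(x)$. Hence $h$ is a continuous extension of $f$ to $\abR$, so by the uniqueness clause of Lemma~\ref{lem:appendA:16} we conclude $h=\bar{f}$; that is, $\bar{f}(x)=f(c)+a(x-c)+\bar{r}_c(x)(x-c)$ for all $x\in\abR$.

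Finally I would read off differentiability. The characterization in fact~\ref{lem:appendA:4}, being one of the standard facts of Real Analysis and hence valid over $\R$, applies to $\bar{f}$: since $\bar{r}_c$ is continuous at $c$ with $\bar{r}_c(c)=0$, the displayed identity for $\bar{f}$ shows that $\bar{f}$ is differentiable at $c$ with $\bar{f}'(c)=a=f'(c)$, as required.

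I do not anticipate a serious obstacle. The one point meriting a moment's care is the passage from $r_c$ to $\bar{r}_c$ --- confirming that the extended remainder still vanishes at $c$ and is continuous there --- but this is immediate precisely because $c$ lies in $\F$, so no genuine limiting argument at $c$ is needed. It is worth noting that the hypothesis of uniform continuity of $f$ (rather than mere continuity at $c$) is used exactly once, and essentially: it is what allows Lemma~\ref{lem:appendA:5} to conclude that $r_c$ is uniformly continuous, hence extendable by Lemma~\ref{lem:appendA:16}.
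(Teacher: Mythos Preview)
Your proof is correct and follows essentially the same route as the paper: invoke fact~\ref{lem:appendA:4} to obtain $r_c$, use Lemma~\ref{lem:appendA:5} to see that $r_c$ is uniformly continuous, extend it via Lemma~\ref{lem:appendA:16} to $\bar{r}_c$, define the Taylor-type function on $\abR$, and conclude by uniqueness of the extension and the characterization~\ref{lem:appendA:4} over $\R$. The only cosmetic difference is that the paper additionally remarks that $f$ is bounded before citing Lemma~\ref{lem:appendA:5} and calls the auxiliary function $g$ rather than $h$.
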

\begin{proof}
 Since $f$ is uniformly continuous, it is bounded by Lemma~\ref{lem:appendA:16} and hence $r_c(x):\abF \to \F$ is uniformly continuous by Lemma~\ref{lem:appendA:5}.
 Therefore, $r_c$ has a unique uniformly continuous extension, $\bar{r}_c : \abR \to \R$. Clearly, $\bar{r}_c(0) =r_c(0) = 0$.
 
 As a result, the function $g : \abR \to \R$, defined by $g(x) = f(c) + f'(c)(x-c) + \bar{r}_c(x)(x-c)$, is uniformly continuous as well. 
 Furthermore, $g(x) = f(c) + f'(c)(x-c) + \bar{r}_c(x)(x-c) = f(c) + f'(c)(x-c) + r_c(x)(x-c) = f(x)$  for all $x \in \abF$, 
 so $g$ agrees with $f$ on $\abF$. By the uniqueness of the continuous extension, we get $\bar{f}(x) = g(x) = f(c) + f'(c)(x-c) + \bar{r}_c(x)(x-c)$, 
 which, in light of \ref{lem:appendA:4}, means that $\bar{f}$ is differentiable at $c$ with $\bar{f}'(c) = f'(c)$.
\end{proof}

We are now finally ready for the

\begin{proof}[Conclusion of the proof of Theorem \ref{thm:UniformDiffRespectsExtension}]
 First, by \ref{lem:appendA:3}, $f'$ is uniformly continuous and therefore has a unique uniformly continuous extension $\overline{f'}$. 
 Moreover, $f'$ is bounded by Lemma~\ref{lem:appendA:16} and hence $f$ is uniformly continuous by Lemma~\ref{lem:appendA:2}, i.e. $\bar{f}:\abR \to \R$ exists.
 
 We will show that $\bar{f}$ is uniformly differentiable on $\abR$ and $\bar{f}' = \overline{f'}$. To this end, let $\epsilon \in \R^+$ be given. Since $\F$ is dense in $\R$,
 we can assume w.l.o.g. $\epsilon \in \F^+$.
 
 Since $f$ is uniformly  differentiable, we can find a $\delta \in \F^+$ such that for all $x,y \in \abF$ with $0 < |x-y| < \delta$, $|D_y[f](x) - f'(y)| < \epsilon/2$. 
 
 Now let $p,q \in \abR$ be such that $0 < |p-q| < \delta$; w.l.o.g. assume that $p < q$. Since $\Q$ is dense in $\R$, there are sequences $\{p_n\}$ and $\{q_n\}$ 
 drawn from $\F$ convergent to $p$ and $q$, respectively. We may assume w.l.o.g. that $p_n,q_n \in \abF$ and $0 < |p_n-q_n| < \delta$ for all $n \in \N$, 
 which implies $|D_{q_n}[f](p_n) - f'(p_n)| < \epsilon/2$. 
 
 Then by definition and the continuity of $\bar{f}$ and $\overline{f'}$, we have:
 \begin{align*}
  |D_q[\bar{f}](p) - \overline{f'}(p)| &= \left|\lim_{n \to \infty}\left(\frac{\bar{f}(p_n)-\bar{f}(q_n)}{p_n-q_n} - \overline{f'}(p_n)\right)\right|
    \intertext{Moreover, $\bar{f}(p_n) = f(p_n)$, $\bar{f}(q_n) = f(q_n)$,  and $\overline{f'}(p_n) = f'(p_n)$, since $p_n,q_n \in \abF$, and so}
   &= \lim_{n \to \infty}\left|D_{q_n}[f](p_n) - f'(p_n)\right| \leq \frac{\epsilon}{2} < \epsilon .
 \end{align*}
  Since our choice of $\delta$ was independent of $p$ and $q$, this shows that $\bar{f}$ is uniformly differentiable, with $\bar{f}' \equiv \overline{f'}$, as desired.
\end{proof}

\bibliographystyle{abbrv}

\begin{thebibliography}{10}

\bibitem{bressoud2011}
D.~M. Bressoud.
\newblock Historical reflections on teaching the fundamental theorem of
  integral calculus.
\newblock {\em American Mathematical Monthly}, 118(2):99--115, 2011.

\bibitem{butcher1985}
R.~S. Butcher, W.~L. Hamilton, and J.~G. Milcetich.
\newblock Uncountable fields have proper uncountable subfields.
\newblock {\em Mathematics Magazine}, 58(3):171--172, 1985.

\bibitem{clark2012}
P.~L. Clark.
\newblock The instructor's guide to real induction.
\newblock {\em arXiv:1208.0973}, 2012.

\bibitem{fiftyPlus2013}
M.~Deveau and H.~Teismann.
\newblock 72+ 42: Characterizations of the completeness and archimedean
  properties of ordered fields.
\newblock {\em Real Analysis Exchange}, 39(2), 2014.

\bibitem{ford1938}
L.~R. Ford.
\newblock Fractions.
\newblock {\em The American Mathematical Monthly}, 45(9):586--601, 1938.

\bibitem{olmsted1973}
J.~M.~H. Olmsted.
\newblock Riemann integration in ordered fields.
\newblock {\em Two-Year College Mathematics Journal}, pages 34--40, 1973.

\bibitem{propp2012}
J.~Propp.
\newblock Real analysis in reverse.
\newblock {\em arXiv:1204.4483v1}, 2012.

\bibitem{propp2013}
J.~Propp.
\newblock Real analysis in reverse.
\newblock {\em The American Mathematical Monthly}, 120(5):392--408, 2013.
\newblock Earlier versions: arXiv:1204.4483.

\bibitem{riemenschneider2001}
O.~Riemenschneider.
\newblock 37 elementare axiomatische {C}harakterisierungen des reellen
  {Z}ahlk\"{o}rpers.
\newblock In {\em Mitteilungen der Mathematischen Gesellschaft in Hamburg},
  volume~20, pages 71--95, 2001.

\bibitem{schutt1989}
L.~A. Rubel and R.~Schutt.
\newblock Solution to monthly problem \#6548.
\newblock {\em The American Mathematical Monthly}, 96(6):533--535, 1989.

\bibitem{teismann2013}
H.~Teismann.
\newblock Toward a more complete list of completeness axioms.
\newblock {\em The American Mathematical Monthly}, 120(2):99--114, 2013.

\end{thebibliography}

\end{document}